\def\N{\mathbb{N}}
\def\e{\mathrm{e}}
\def\F{\mathrm{F}}
\newcommand{\bN}{\mathbb{N}}
\newtheorem{theorem}{Theorem}
\newtheorem{definition}[theorem]{Definition}
\newtheorem{proposition}[theorem]{Proposition}
\newtheorem{corollary}[theorem]{Corollary}
\newtheorem{lemma}[theorem]{Lemma}
\theoremstyle{remark}
\newtheorem{example}[theorem]{Example}
\newtheorem{remark}[theorem]{Remark}
\title{Generalized strongly increasing semigroups}
\author{E.R. Garc\'{\i}a Barroso, J.I. Garc\'{\i}a-Garc\'{\i}a,\\ A. Vigneron-Tenorio
\footnote{
\noindent The first-named author was partially supported by the Spanish Project
   MTM2016-80659-P. The second and third-named author were partially supported by the Spanish Project MTM2017-84890-P and Junta de Andaluc\'{\i}a group FQM-366.}
}
\date{}
\begin{document}

\maketitle
\abstract{
In this work we present a new class of numerical semigroups called GSI-semigroups. We see the relations between them and others families of semigroups and we give explicitly their set of gaps.
Moreover, an algorithm to obtain all the GSI-semigroups up to a given Frobenius number is provided and the realization of positive integers as Frobenius numbers of GSI-semigroups is studied.
}

{\small
       2010 {\it Mathematics Subject Classification:} Primary 20M14; Secondary 14H20.

{\it Key words:} Generalized strongly increasing semigroup, strongly increasing semigroup, Frobenius number, singular analytic plane curve.
}

\section*{Introduction}
Let $\N=\{0,1,2,\ldots\}$ be the set of nonnegative integers. A numerical semigroup is a subset $S$ of $\N$ closed under addition, $0\in S$ and $\N\backslash S$, its gapset,  is finite.
The least not zero element in $S$ is called the multiplicity of $S$, we denote it by $m(S)$.
Given a nonempty subset $A=\{a_1,\dots,a_n\}$ of $\N$ we denote by $\langle A \rangle$ the smallest submonoid of $(\N,+)$ containing $A$; the submonoid $\langle A\rangle$ is equal to the set $\N a_1+\dots+\N a_n$.
The minimal system of generators of $S$ is the smallest subset of $S$ generating it, and its cardinality, denoted by $\e(S)$, is known as the embedding dimension of $S$.
It is well known (see Lemma 2.1 from \cite{semigrupos}) that $\langle  A \rangle$ is a numerical semigroup if and only if $\gcd (A)=1$. The cardinality of $\N\setminus S$ is called the genus of $S$ (denoted by $g(S)$) and its maximum is known as the Frobenius number of $S$ (denoted by $\F(S)$).

Numerical semigroups appear in several areas of mathematics and its theory is connected with Algebraic Geometry and Commutative Algebra (see \cite{barucci1997maximality}, \cite{MR713060}) as well as with Integer Optimization (see \cite{VBlanco}) and Number Theory (see \cite{assi2016numerical}).
It is common the study of families of numerical semigroups, for instance symmetric semigroups, irreducible semigroups and strongly increasing semigroups (see \cite{GB-P}, \cite{MR1981406}) or the study or characterization of invariants, for instance the Frobenius number, the set of gaps, the genus, etc. (see \cite{A-GS},   \cite{MR3836842}, \cite{MR1981406} and \cite{MR3648517}).

Inspired in \cite{A-GS} and \cite{GB-P},
and with the aim of study the sets of gaps of strongly increasing semigroups (shorted by SI-semigroups), we introduce the concept of generalized strongly increasing semigroup (shorted by GSI-semigroups). These numerical semigroups $\bar S=\langle v_0,\dots,v_{h},\gamma \rangle$ are the gluing of a semigroup $S$ with $\N$, we denote them by  $S \oplus_{d,\gamma}\N$, where $S=\langle v_0/d,\dots, v_{h}/d \rangle$,  $d=\gcd(v_0,\dots,v_{h})>1$ and $\gamma\in \N$ such that $\gamma>\max\{d\F(S),v_h\}$.

Our main result is Theorem \ref{th:huecos}, where we describe the set of gaps of GSI-semigroups.
Since every SI-semigroup is a GSI-semigroup, our description of the gaps is also valid for SI-semigroups.
Semigroups of values associated with plane branches are always
SI-semigroups and 
their sets of gaps
describe topological invariants of the curves (see \cite{GB-P}) which are used to classify singular analytic plane curves.
Due to the fact that the condition for being GSI-semigroup is straightforward to check from a given system of generators, it is easy to construct subfamilies of GSI-semigroups and thus of SI-semigroups.

In this work we also compare the class of GSI-semigroups with other families of numerical semigroups obtained as gluing of numerical semigroups. These are the classes of telescopic, free and complete intersection numerical semigroups. In \cite{A-GS}, it is constructed the set of complete intersection numerical semigroups with given Frobenius number, and some special subfamilies as free and telescopic numerical semigroups, and numerical semigroups associated with irreducible singular plane curves are studied.
As we pointed above, we prove that SI-semigroups are always GSI-semigroups. We also prove that GSI-semigroups are not included in the other three above-mentioned families.

Some algorithms for computing GSI-semigroups are provided in this work. One of them computes the set of GSI-semigroups up to a fixed Frobenius number. We prove that for any odd number $f$, there is at least a GSI-semigroup which Frobenius number equal to $f$. For even numbers, it does not always happen. Thus, GSI-semigroups with even Frobenius numbers are also studied, and we present an algorithm to check whether a GSI-semigroup with a given even Frobenius number.

This work is organized as follows.
In Section \ref{seccion1}, we introduce the GSI-semigroups and some of their properties. We prove that SI-semigroups are GSI-semigroups (see Corollary \ref{c:SI}). We also compare GSI-semigroups with another families such as free, telescopic and complete intersection numerical semigroups.
In Section \ref{seccion2}, the main result of this paper is presented (Theorem \ref{th:huecos}). This theorem gives us an explicit formula for the set of gaps of  GSI-semigroups.
We finish this work with Section \ref{seccion3}, where we give an algorithm for computing the set of GSI-semigroups up to a fixed Frobenius number and we show some pro\-per\-ties of Frobenius numbers of GSI-semigroups. In this last section, we also provide an algorithm to test whether there is a GSI-semigroup with given even Frobenius number.

\section{Generalized strongly increasing semigroups}\label{seccion1}
The gluing of $S=\langle v_0,\ldots ,v_h \rangle $ and $\mathbb N$ with respect to $d$ and $\gamma$ with $\gcd(d,\gamma)=1$ (see \cite[Chapter 8]{semigrupos}) is the numerical semigroup $\N dv_0+\dots+\N dv_h+\N\gamma$. We denote it by $S \oplus_{d,\gamma}\N$.

\begin{definition}
A numerical semigroup $\bar S$ is a generalized strongly increasing semigroup
whenever $\bar S$ is the gluing of a numerical semigroup $S=\langle v_0,\dots,v_h\rangle $ with respect to $d$ and $\gamma$ (that is, $\bar S=S \oplus_{d,\gamma}\N$), where $d\in\N\setminus \{0,1\}$ and $\gamma\in\N$ with $\gamma>\max\{d\F(S),dv_{h}\}$
(note that $d$ and $\gamma$ are coprimes).
\end{definition}

The first example of GSI-semigroups are numerical semigroups generated by two positive integers $\bar S=\langle a,b \rangle$ with $a<b$. For these semigroups, set $S=\N=\langle 1 \rangle$, $d=a$, and $\gamma=b$. Since $\F(S)=-1$, $\gamma=b> \max\{d\F(S),d\}=\{a\cdot (-1),a\}=a$.
From Sylvester (see \cite{sylvester}), we know that the Frobenius numbers of these semigroups are given by the formula $a\cdot b-a-b$. Hence, every odd natural number is realizable as Frobenius number of a GSI-semigroup.

Our definition of GSI-semigroups is inspired on the one of SI-semigroups. We remind you how they are defined (see \cite{Ba-GB-P} for further details).

A sequence of positive integers $(v_0,\ldots,v_h)$ is called a
characteristic sequence if it satisfies the following two
properties:
\begin{enumerate}
\item[\rm{(CS1)}] Put $e_k=\gcd (v_0,\ldots,v_k)$ for $0\leq k \leq  h$. Then $e_k<e_{k-1}$ for $1\leq k \leq  h$
and $e_h=1$.
\item[\hypertarget{cs2}{\rm{(CS2)}}] $e_{k-1}v_k<e_kv_{k+1}$ for $1\leq k \leq  h-1$.
\end{enumerate}

\noindent We put  $n_k= \frac{e_{k-1}}{e_k}$ for $1\leq k\leq h$. Therefore
$n_k>1$ for $1\leq k \leq  h$ and $n_{h}=e_{h-1}$. If $h=0$, the only
characteristic sequence is $(v_0)=(1)$. If $h=1$, the sequence
$(v_0,v_1)$ is a characteristic sequence if and only if  $\gcd
(v_0,v_1)=1$. Property \hyperlink{cs2}{\rm{(CS2)}} plays a role  if and only if
$h\geq 2$.

\begin{lemma}(\cite[Lemma 1.1]{Ba-GB-P})
\label{llll}
Let $(v_0,\ldots,v_h)$, $h\geq 2$ be a characteristic sequence. Then,
\begin{enumerate}
\item[\hbox{\rm (i)}] $v_1<\cdots<v_h$ and $v_0<v_2$.
\item[\hbox{\rm (ii)}] Let $v_1<v_0$. If $v_0\not\equiv 0$ \hbox{\rm (mod $v_1$) } then $(v_1,v_0,v_2,\ldots,v_h)$ is a cha\-rac\-teristic sequence. If  $v_0\equiv 0$ \hbox{\rm (mod $v_1$) } then $(v_1,v_2,\ldots,v_h)$ is a cha\-rac\-teristic sequence.
\end{enumerate}
\end{lemma}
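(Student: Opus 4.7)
The plan is to prove (i) directly from (CS1) and (CS2), and to prove (ii) by writing down the candidate sequence explicitly, computing its gcd sequence, and checking (CS1) and (CS2) term by term.

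For part (i), I would first observe that (CS1) forces $n_k = e_{k-1}/e_k \geq 2$ for $1 \leq k \leq h$, so (CS2) rewrites as $v_{k+1} > n_k v_k \geq 2 v_k$ for $1 \leq k \leq h-1$. This immediately yields the chain $v_1 < v_2 < \cdots < v_h$. For $v_0 < v_2$, I would specialize (CS2) to $k=1$, obtaining $v_0 v_1 = e_0 v_1 < e_1 v_2$. Since $e_1 = \gcd(v_0,v_1)$ is a positive divisor of $v_1$, one has $e_1 \leq v_1$, hence $v_0 v_1 < v_1 v_2$; dividing by $v_1 > 0$ gives the result.

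For part (ii), I would denote the candidate sequence by $(v'_0, v'_1, \ldots)$, compute its gcd sequence $(e'_k)$, and compare with the original. In Case 1 ($v_1 < v_0$ and $v_1 \nmid v_0$), the new sequence is $(v_1, v_0, v_2, \ldots, v_h)$ and a direct calculation gives $e'_0 = v_1$ and $e'_k = e_k$ for $k \geq 1$. The only new instance of (CS1) to check is $e'_1 < e'_0$, which reduces to $\gcd(v_0,v_1) < v_1$, equivalent to the hypothesis $v_1 \nmid v_0$. The only new instance of (CS2) occurs at $k=1$: $e'_0 v'_1 < e'_1 v'_2$, that is $v_1 v_0 < e_1 v_2$, which is literally the original (CS2) at $k=1$. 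All remaining inequalities are inherited unchanged.

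In Case 2 ($v_1 \mid v_0$), the new sequence $(v_1, v_2, \ldots, v_h)$ has length $h$. Since $v_1 \mid v_0$, one has $\gcd(v_1, v_2, \ldots, v_{k+1}) = \gcd(v_0, v_1, \ldots, v_{k+1})$, so $e'_k = e_{k+1}$; both (CS1) and (CS2) for the new sequence are then reindexings of the original (CS1) and (CS2). The only edge case is $h=2$, where the shortened sequence $(v_1,v_2)$ has length $2$, (CS2) becomes vacuous, and it suffices to note that $\gcd(v_1,v_2) = 1$ because $v_1 \mid v_0$ and $e_2 = 1$. I expect the main obstacle to be purely bookkeeping — keeping old and new indexings straight while translating (CS1) and (CS2) — rather than any substantive technical difficulty.
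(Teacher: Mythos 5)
Your proof is correct. Note that the paper itself gives no proof of this lemma --- it is quoted verbatim from \cite[Lemma 1.1]{Ba-GB-P} --- so there is nothing to compare against except the source; your argument (deriving $v_{k+1}>n_kv_k\geq 2v_k$ from (CS1)--(CS2) for part (i), using $e_1\leq v_1$ in $v_0v_1=e_0v_1<e_1v_2$ for $v_0<v_2$, and verifying (CS1)--(CS2) for the rearranged or shortened sequence via $e_k'=e_k$ resp.\ $e_k'=e_{k+1}$ in part (ii)) is the standard direct verification and is complete, including the $h=2$ edge case where the shortened sequence falls under the $\gcd(v_0,v_1)=1$ criterion.
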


We denote by $\langle v_0,\ldots,v_h \rangle $ the semigroup generated by the characteristic sequence $(v_0,\ldots,v_h)$. Observe that  $\langle v_0,\ldots,v_h \rangle $  is a numerical semigroup.
A semigroup $S\subseteq \bN$  is Strongly Increasing (SI-semigroup) if $S\neq \{0\}$ and it is generated by a characteristic sequence. Note that by Lemma \ref{llll}, we can assume that $v_0<\dots <v_h$.

\begin{theorem}\label{theorem_construction_StP}
    Let $\bar S$ be a numerical semigroup with $\e(\bar S)=h+1.$ Then, $\bar S$ is strongly increasing if and only if one of the two next conditions holds:
    \begin{enumerate}
        \item  $h=1,$ $\bar S=\N\oplus_{d,\gamma} \N=\langle d,\gamma \rangle$, where $d$ and $\gamma$  are two coprime integers.
        \item $h>1,$ $\bar S=S\oplus_{d,\gamma}\N$, where $S=\langle  v_0,\ldots, v_{h-1} \rangle$ is a strongly increasing semigroup with embedding dimension $h$ and
        $\gamma,d>1$ are two coprime integer numbers  such that $\gamma>d\gcd(v_0,\ldots, v_{h-2})v_{h-1}.$
    \end{enumerate}
\end{theorem}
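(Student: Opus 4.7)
The plan is to prove both implications by directly checking the characteristic-sequence axioms (CS1) and (CS2), using the observation that gluing by $\N$ amounts to appending one generator $\gamma$ after rescaling the previous ones by the factor $d$.

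For the forward direction, suppose $\bar S$ is strongly increasing with characteristic sequence $(\bar v_0,\ldots,\bar v_h)$ and associated gcd-sequence $\bar e_k=\gcd(\bar v_0,\ldots,\bar v_k)$. If $h=1$, (CS1) forces $\gcd(\bar v_0,\bar v_1)=1$ and $\bar S=\langle \bar v_0,\bar v_1\rangle$ is trivially the gluing $\N\oplus_{\bar v_0,\bar v_1}\N$. If $h\geq 2$, I set $d:=\bar e_{h-1}$, $\gamma:=\bar v_h$, and $v_i:=\bar v_i/d$ for $0\le i\le h-1$. The tuple $(v_0,\ldots,v_{h-1})$ has gcd-sequence $\bar e_k/d$, which is strictly decreasing and ends at $1$, and (CS2) at index $k\leq h-2$ for $(v_0,\ldots,v_{h-1})$ is equivalent (after multiplying by $d^2$) to (CS2) for the original sequence; hence $(v_0,\ldots,v_{h-1})$ is characteristic and $S:=\langle v_0,\ldots,v_{h-1}\rangle$ is strongly increasing with embedding dimension $h$. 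Since $\bar v_i=dv_i$ for $i<h$ and $\gcd(d,\gamma)=\bar e_h=1$, one obtains $\bar S = S\oplus_{d,\gamma}\N$. Finally, using $d\gcd(v_0,\ldots,v_{h-2})=\bar e_{h-2}$ and $dv_{h-1}=\bar v_{h-1}$, the required inequality $\gamma>d\gcd(v_0,\ldots,v_{h-2})v_{h-1}$ rewrites as $\bar e_{h-1}\bar v_h>\bar e_{h-2}\bar v_{h-1}$, which is precisely (CS2) at index $k=h-1$.

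For the converse, assume $\bar S=S\oplus_{d,\gamma}\N$ with the stated hypotheses, and let $e_k^S=\gcd(v_0,\ldots,v_k)$ denote the gcd-sequence of $S$'s characteristic sequence $(v_0,\ldots,v_{h-1})$. I claim $(dv_0,\ldots,dv_{h-1},\gamma)$ is a characteristic sequence for $\bar S$: its gcd-sequence equals $de_k^S$ for $k\leq h-1$ and $\gcd(d,\gamma)=1$ at $k=h$, so (CS1) follows from $d>1$ together with (CS1) for $S$; (CS2) at $k\leq h-2$ cancels $d^2$ and reduces to (CS2) for $S$; and (CS2) at $k=h-1$ reads $d^2\gcd(v_0,\ldots,v_{h-2})v_{h-1}<d\gamma$, which is exactly the assumed separation inequality. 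Hence $\bar S$ is strongly increasing, and case (1) with $h=1$ is handled identically.

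The main technical point is aligning the ``separation'' inequality stated in terms of $S$'s generators with the last instance of (CS2) stated in terms of $\bar S$'s characteristic sequence; once one fixes the identification $d=\bar e_{h-1}$ and renormalizes by $d$, this match is automatic, and the rest of the proof is gcd-bookkeeping.
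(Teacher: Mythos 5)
Your proposal is correct and follows essentially the same route as the paper: identify $d=\bar e_{h-1}$, $\gamma=\bar v_h$, rescale the first $h$ generators by $d$, and verify that (CS1) and (CS2) transfer between $(v_0,\ldots,v_{h-1})$ and $(dv_0,\ldots,dv_{h-1},\gamma)$, with the separation inequality $\gamma>d\gcd(v_0,\ldots,v_{h-2})v_{h-1}$ matching (CS2) at the last index. The only detail the paper makes slightly more explicit is that $\e(\bar S)=h+1$ forces $\e(S)=h$, which you assert without comment; otherwise the two arguments coincide.
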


\begin{proof}
The case $h=1$ is trivial by definition of characteristic sequences.

Assume $h>1$ and that $\bar S=\langle \bar v_0,\ldots ,\bar v_h \rangle $ is a strongly increasing numerical semigroup with embedding dimension strictly greater than $2.$
Let $\bar e_{i}=\gcd(\bar v_0,\ldots,\bar  v_{i})$ for $0\leq i\leq h$. Put $v_i=\frac{\bar v_i}{\bar e_{h-1}}$ for $0\leq i \leq h-1$.
Then, $(v_0,\ldots, v_{h-1})$ is a characteristic sequence.
Let $S=\langle v_0,\ldots, v_{h-1}\rangle $.
Since $\e(\bar S)=h+1$, then $\e(S)=h$.
Set $\gamma=\bar v_{h}$ and $d=\bar e_{h-1}$, we get $\bar S=S\oplus_{d,\gamma}\N$.
We have that $\gamma=\bar v_h >\bar  v_{h-1}=d v_{h-1}$, and since $\bar S$ is a SI-semigroup,
\begin{eqnarray*}
        \gamma=\bar v_h&>&\frac{\bar e_{h-2}}{\bar e_{h-1}}\bar v_{h-1}=\frac{\gcd(\bar v_0,\ldots,\bar v_{h-2})}{\bar e_{h-1}}\bar v_{h-1}\\
        &=&
        \frac{\gcd(\bar e_{h-1}v_0,\ldots,\bar e_{h-1}v_{h-2})}{\bar e_{h-1}}\bar e_{h-1}v_{h-1}\\
        &=&\bar e_{h-1}\gcd(v_0,\ldots, v_{h-2})v_{h-1}\\
        &=&d\gcd(v_0,\ldots, v_{h-2})v_{h-1}.
\end{eqnarray*}

Conversely, let $S=\langle v_0,\ldots ,v_{h-1}\rangle$ be a strongly increasing semigroup with embedding dimension $h,$ and $\gamma,d>1$ be two coprime integer numbers such that  $\gamma>d\gcd(v_0,\ldots,v_{h-2})v_{h-1}.$
Denote  $e_i=\gcd(v_0,\dots,v_{i})$ for $i=0,\dots,h-1$.
Take $\bar S=\langle \bar v_0,\dots,\bar v_h=\gamma \rangle$ the gluing semigroup $S\oplus_{d,\gamma}\N$ and define $\bar e_i=\gcd(\bar v_0,\dots,\bar v_{i})$ for $i=0,\dots,h$.
We have that $\bar e_0=de_0>\cdots >\bar e_{h-1}=d e_{h-1}=d>\bar e_h=\gcd(\gamma,d)=1$. Since  $e_{i-1} v_i<e_i v_{i+1}$, for $1\leq i\leq h-1$ then $ e_{i-1} v_i d^2< e_i v_{i+1}d^2$ and therefore $\bar e_{i-1} \bar v_i<\bar e_i \bar v_{i+1}$. By hypothesis $d e_{h-2} v_{h-1}<\gamma$, hence $d^2 e_{h-2} v_{h-1}<d \gamma$ and therefore $\bar e_{h-2} \bar v_{h-1}<\bar  e_{h-1} \gamma$. We conclude that $\bar S$ is a SI-semigroup.
\end{proof}

The following result give us a formula for the conductor (the Frobenius number plus $1$) of a SI-semigroup.

\begin{proposition}(\cite[Proposition 2.3 (4)]{GB-P}, \cite[Proposition 1.2]{Ba-GB-P})
	\label{conductor}
	Let $S=\langle v_0,\ldots,v_h \rangle$ be the semigroup generated by the characteristic sequence $(v_0,\ldots,v_h)$.
	The conductor of the  semigroup $S$ is
	\[
	c(S)=\sum_{i=1}^h(n_i-1)v_i-v_0+1.
	\]
	Moreover, the conductor of $S$ is an even number and its genus is $g(S)=\frac {c(S)} 2$.
\end{proposition}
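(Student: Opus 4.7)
The plan is to prove both assertions simultaneously by induction on $h$, realising $S$ as a gluing and invoking Delorme's formulas. The base case $h=1$ is Sylvester's formula: coprimality of $v_0,v_1$ gives $F(S)=v_0v_1-v_0-v_1$, the semigroup $\langle v_0,v_1\rangle$ is symmetric with genus $(v_0-1)(v_1-1)/2$, and since at least one of $v_0,v_1$ is odd the conductor $(v_0-1)(v_1-1)$ is even, matching the stated formula with $n_1=v_0$.

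For $h\geq 2$, set $S'=\langle v_0/e_{h-1},\dots,v_{h-1}/e_{h-1}\rangle$. A routine check shows that $(v_0/e_{h-1},\dots,v_{h-1}/e_{h-1})$ is again a characteristic sequence of length $h-1$: the indices $n_i$ for $i<h$ are preserved, and (CS1)--(CS2) rescale by a constant factor. Then
\[
S = e_{h-1}S' + v_h\N = S'\oplus_{e_{h-1},v_h}\N,
\]
a genuine gluing because $\gcd(e_{h-1},v_h)=e_h=1$ and (CS2) forces $v_h$ to exceed $c(S')$, so $v_h\in S'$. Applying Delorme's formula for the Frobenius number of a gluing, with $F(\N)=-1$, one gets
\[
F(S)=e_{h-1}F(S')+v_h(-1)+e_{h-1}v_h=n_hF(S')+(n_h-1)v_h.
\]
Substituting the inductive hypothesis $F(S')=\sum_{i=1}^{h-1}(n_i-1)(v_i/e_{h-1})-v_0/e_{h-1}$ and using $n_h=e_{h-1}$, the factor $e_{h-1}$ cancels in the first summand, yielding $F(S)=\sum_{i=1}^{h}(n_i-1)v_i-v_0$, hence $c(S)=\sum_{i=1}^{h}(n_i-1)v_i-v_0+1$.

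For the moreover clause, symmetry is preserved under gluing, and $\N$ is symmetric by convention ($F(\N)=-1$, $g(\N)=0$). By induction $S'$ is symmetric, hence so is $S$, so $F(S)$ is odd, $c(S)$ is even, and $g(S)=c(S)/2$. The principal obstacle I expect is establishing the gluing hypothesis $v_h\in S'$ inside the induction; this requires carrying along the conductor estimate $v_h>c(S')$, which is precisely where condition (CS2) on characteristic sequences is used in its strongest form.
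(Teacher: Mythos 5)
The paper does not actually prove this proposition: it is imported verbatim from \cite{GB-P} and \cite{Ba-GB-P}, so there is no internal proof to compare yours against. Your argument is nonetheless correct and gives a legitimate self-contained proof. The base case is Sylvester's formula; the reduction $(v_0/e_{h-1},\dots,v_{h-1}/e_{h-1})$ is indeed a characteristic sequence with the same $n_1,\dots,n_{h-1}$; the decomposition $S=e_{h-1}S'+v_h\N$ is a gluing since $e_h=\gcd(e_{h-1},v_h)=1$; and Delorme's formula $\F(S)=e_{h-1}\F(S')+v_h\F(\N)+e_{h-1}v_h=n_h\F(S')+(n_h-1)v_h$ telescopes with the inductive hypothesis to the stated conductor, while preservation of symmetry under gluing (with $\N$ symmetric) gives the oddness of $\F(S)$ and $g(S)=c(S)/2$. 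The one point you rightly flag --- verifying $v_h\in S'$ --- does close: by the inductive hypothesis and (CS2) one has $\F(S')=(n_1v_1'-v_2')+\cdots+(n_{h-2}v_{h-2}'-v_{h-1}')+n_{h-1}v_{h-1}'-v_1'-v_0'<n_{h-1}v_{h-1}'=e_{h-2}v_{h-1}/e_{h-1}^2<v_h/e_{h-1}\le v_h$, where the last strict inequality is exactly (CS2) for $k=h-1$; this is the same termwise estimate the paper itself performs in the displayed computation immediately after the proposition. Your approach via gluing buys a short inductive proof that also explains \emph{why} these semigroups are symmetric (hence complete intersections), whereas the cited sources argue directly on the arithmetic of characteristic sequences; for the purposes of this paper, which already treats SI-semigroups as iterated gluings (Theorem \ref{theorem_construction_StP}), your route is arguably the more natural one.
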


By Proposition \ref{conductor}, we get
\begin{eqnarray*}
	\F(S)&=&\sum_{i=1}^h(n_i-1)v_i-v_0=\sum_{i=1}^hn_iv_i-\sum_{i=1}^hv_i-v_0\\
	&=&(n_1v_1-v_2)+(n_2v_2-v_3)+\cdots\\
    & & +(n_{h-1}v_{h-1}-v_h)+n_hv_h-v_1-v_0\\
	&\leq&-(h-1)+n_hv_h-v_1-v_0\\
	&=&e_{h-1}v_h-v_0-v_1-h+1< e_{h-1}v_h-v_0-v_1.
\end{eqnarray*}

Assume that $\bar S=\langle \bar v_0,\dots ,\bar v_h\rangle $ is a SI-semigroup satisfying that $\bar v_0<\dots <\bar v_h=\gamma$. Set $d=\bar e_{h-1}=\gcd(\bar v_0,\dots,\bar v_{h-1})$, $\gamma=\bar v_h$ and $S=\langle \bar v_0/d,\dots, \bar v_{h-1}/d\rangle$.
We have that
$d\gamma=\bar e_{h-1}\bar v_h>\bar e_{h-1}\bar v_h-\bar v_0-\bar v_1>\F(\bar S)=\sum_{i=1}^h(\bar n_i-1)\bar v_i-\bar v_0=d\F(S)+(\bar e_{h-1}-1)\bar v_h=d\F(S)+(d-1)\gamma$. Thus $\gamma>d\F(S)$.
Since $\bar S$ is a SI-semigroup, the property \hyperlink{cs2}{\rm{(CS2)}} if fulfilled. Using that the generators are ordered, we obtain that $\bar v_h<\bar e_{h-1}\bar v_h=d\bar v_h< \gamma$.

So we can state the following result.

\begin{corollary}\label{c:SI}
	Every SI-semigroup is a GSI-semigroup.
\end{corollary}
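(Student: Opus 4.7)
The plan is to extract the GSI-decomposition of a given SI-semigroup directly from the gluing structure supplied by Theorem \ref{theorem_construction_StP}, then upgrade the size condition on the new generator $\gamma$ to the strict bounds required by the definition of a GSI-semigroup. Writing $\bar S = \langle \bar v_0, \ldots, \bar v_h\rangle$ with $\bar v_0 < \cdots < \bar v_h$ (cf.\ Lemma \ref{llll}), Theorem \ref{theorem_construction_StP} already presents $\bar S$ as a gluing $\bar S = S \oplus_{d,\gamma}\N$, with $d = \bar e_{h-1}$, $\gamma = \bar v_h$, $\gcd(d,\gamma) = 1$, $d > 1$ when $h \geq 2$, and $S = \langle \bar v_0/d, \ldots, \bar v_{h-1}/d\rangle$ an SI-semigroup of embedding dimension $h$. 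All that remains is to verify the two strict inequalities from the definition: $\gamma > d\F(S)$ and $\gamma > dv$, where $v$ denotes the largest generator of $S$.

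The inequality $\gamma > dv$ is immediate, since $dv = \bar v_{h-1} < \bar v_h = \gamma$. The substantive point is $\gamma > d\F(S)$, and my approach is to relate $\F(\bar S)$ to $\F(S)$ by a brief calculation from Proposition \ref{conductor}. Writing the invariants of $\bar S$ in terms of those of $S$ one has $\bar e_i = d e_i$ for $0 \leq i \leq h-1$, $\bar n_i = n_i$ for $1 \leq i \leq h-1$, and $\bar n_h = d$, and substituting in the conductor formula yields the identity
\begin{equation*}
\F(\bar S) = d\,\F(S) + (d-1)\gamma.
\end{equation*}
Combining this with the upper bound $\F(\bar S) < \bar e_{h-1}\bar v_h - \bar v_0 - \bar v_1 = d\gamma - \bar v_0 - \bar v_1$ (itself a direct consequence of Proposition \ref{conductor}) gives $d\F(S) < \gamma - \bar v_0 - \bar v_1 < \gamma$, as required.

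The case $h = 1$ calls for a short separate check: for $\bar S = \langle \bar v_0, \bar v_1\rangle$ with $\bar v_0 < \bar v_1$ coprime, take $S = \N$, $d = \bar v_0$ and $\gamma = \bar v_1$; since $\F(\N) = -1$, both GSI-inequalities reduce to $\bar v_1 > \bar v_0$, which holds. The main (and essentially only) obstacle in the whole argument is the Frobenius identity $\F(\bar S) = d\F(S) + (d-1)\gamma$, but this is a routine bookkeeping step with the invariants of the characteristic sequence, and once it is in hand everything else is formal.
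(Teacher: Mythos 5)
Your proof is correct and follows essentially the same route as the paper: the same decomposition $d=\bar e_{h-1}$, $\gamma=\bar v_h$, $S=\langle \bar v_0/d,\dots,\bar v_{h-1}/d\rangle$, the same identity $\F(\bar S)=d\F(S)+(d-1)\gamma$ extracted from Proposition \ref{conductor}, and the same upper bound $\F(\bar S)<\bar e_{h-1}\bar v_h-\bar v_0-\bar v_1$ to conclude $\gamma>d\F(S)$, with $\gamma>dv_{h-1}=\bar v_{h-1}$ immediate from the ordering. The only cosmetic differences are your explicit appeal to Theorem \ref{theorem_construction_StP} for the gluing structure and your separate treatment of the case $h=1$, both of which the paper leaves implicit.
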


There are semigroups with similar definitions to SI and GSI semigroups. For example, telescopic, free and complete intersection.

Let $S=\langle v_0,\dots, v_h\rangle$. For $k\in \{0,\dots,h\} $, set $e_k=\gcd(v_0,\dots, v_{k-1})$ ($e_0=v_0$). We say that $S$ is free whenever it is equal to $\N$ or it is the gluing of a free with $\N$.
The semigroup $S$ is telescopic if it is free for the rearrangement $v_0<\dots <v_h$.
A semigroup is complete intersection if it is the gluing of two complete intersection numerical semigroups.
The above three definitions are from \cite{A-GS}.

It is easy to check that SI-semigroups are telescopic, telescopic are free semigroups and free semigroups are complete intersection.
In general, GSI-semigroups are neither strongly increasing nor  telescopic nor free nor complete intersection.
Clearly, $\langle 6,14,22,23\rangle = \langle 3,7,11\rangle\oplus_{2,23}\N$ and $23>\max\{2 \F(\langle 3,7,11\rangle ),2\cdot 11\}$. Thus this is a GSI-semigroup.
We define the functions \texttt{IsSIncreasingNumericalSemigroup} and \texttt{IsGSI} to check is a numerical semigroup is a SI-semigroup and a GSI-semigroup, respectively (the code of these functions is showed in Table \ref{codigo1}).

\begin{table}[hb]
\begin{verbatim}
# The input is a NumericalSemigroup S
# The function returns true if S is a SI-semigroup
IsStronglyIncreasing:=function(S)
    local k, lEs, i, smg;
    smg:=MinimalGeneratingSystemOfNumericalSemigroup(S);
    for i in [2..Length(smg)] do
    	if(Gcd(smg{[1..i-1]})<=Gcd(smg{[1..i]})) then
        	return false;
        fi;
    od;
    for k in [1..(Length(smg)-2)] do
        if (Gcd(smg{[1..k]})*smg[k+1]>=
                    Gcd(smg{[1..(k+1)]})*smg[k+2]) then
        	return false;
        fi;
    od;
    return true;
end;

# The input is a NumericalSemigroup S
# The function returns true if S is a GSI-semigroup
IsGeneralizedStronglyIncreasing:=function(S)
    local smg,d,gamma,aux,S1,fn1;
    smg:=MinimalGeneratingSystemOfNumericalSemigroup(S);
    gamma:=smg[Length(smg)];
    d:=Gcd(smg{[1..Length(smg)-1]});
    aux:=(1/d)*smg{[1..Length(smg)-1]};
    S1:=NumericalSemigroup(aux);
    fn1:=FrobeniusNumber(S1);
    if(gamma<=d*fn1) then
        return false;
    fi;
    if(gamma<=d*aux[Length(aux)]) then
        return false;
    fi;
    return true;
end;
\end{verbatim}
\caption{GAP code of functions to check if a numerical semigroup is SI and/or GSI.}\label{codigo1} 
\end{table}

Applying our functions and the functions \texttt{IsFreeNumericalSemigroup}, \texttt{IsTelescopicNumericalSemigroup} and \texttt{IsCompleteIntersection} of \cite{NumericalSgps1.2.0} to the semigroup $\langle 6,14,22,23\rangle$, we obtain the following outputs:
\begin{verbatim}
gap> IsFreeNumericalSemigroup(
        NumericalSemigroup(6,14,22,23));
false
gap> IsTelescopicNumericalSemigroup(
        NumericalSemigroup(6,14,22,23));
false
gap> IsCompleteIntersection(
        NumericalSemigroup(6,14,22,23));
false
gap> IsSIncreasing(NumericalSemigroup(6,14,22,23));
false
gap> IsGSI(NumericalSemigroup(6,14,22,23));
true
\end{verbatim}
From the results of the above computations, we conclude that the class of GSI-semigroups contains the class of SI-semigroups, but it is different to the classes of free, telescopic and complete intersection semigroups.

\section{Set of gaps of a GSI-semigroup}\label{seccion2}
We have seen that GSI-semigroups are easy to obtain from any numerical semigroup just gluing it with $\N$ with appropiate elements $d$ and $\gamma$. Hence these semigroups form a large family within the set of numerical semigroups. In this section, we deepen into their study by explicitly determining their set of gaps.	

Hereafter the notation $[a \;\mathrm{mod}\;n]$ for an integer $a$ and a natural number $n$ means the remainder of the division of $a$ by $n$, and $[a]_n$ denotes the coset of $a$ modulo $n$. For any two real numbers $a\leq b$ we denote by $[a,b]_{\bN}$ the set of natural numbers belonging to the real interval $[a,b]$. Put $\lfloor a\rfloor $ the integral part of the real number $a$.

\begin{theorem}\label{th:huecos}
Let $S=\langle v_0,\dots ,v_h\rangle$ be a numerical semigroup with $v_0<\dots <v_h$, $d\geq 2$ and $v_{h+1}$ be two natural coprime numbers such that $v_{h+1}>\max\{ d\F(S),dv_{h}\}$. Then the gaps of the GSI-semigroup $\bar S= S\oplus_{d,v_{h+1}}\N$ are
\begin{equation}\label{gaps}
    \begin{array}{lll}
    \N\setminus \bar S & = &\left\{1,\ldots ,dv_0-1 \right\}\cup \left\{ x\in (dv_0,v_{h+1})\cap \N {\;:\;} x\notin dS \right\} \cup \\
    &  & {\cal A}_{d} \cup  \bigcup_{\ell=1}^{d-2} {\cal B}_{d,\ell},
    \end{array}
\end{equation}
where
\[
{\cal B}_{d,\ell}=
\left\{v_{h+1}+[\ell v_{h+1}\;\mathrm{mod}\;d]+kd {\;:\;} 0\leq k\leq \left\lfloor\frac{ \ell v_{h+1}}{d}\right\rfloor-1\right\}
\]
and
\[
{\cal A}_{d}=\bigcup_{k=1}^{d-1}\big(d(\N\setminus S)+kv_{h+1}\big) \mbox{ \rm (${\cal A}_d=\emptyset$ when $S=\N$)}.
\]
Moreover (\ref{gaps}) is a partition of the gapset of $\bar S$ (we do not write ${\cal A}_d$ or ${\cal B}_{d,l}$ if they are empty).
\end{theorem}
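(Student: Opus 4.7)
The plan is to characterize elements of $\bar S$ via a unique canonical representation, translate this into a simple gap test, and then classify gaps by the residue of $x$ modulo $d$ against the interval $[(k-1)v_{h+1},kv_{h+1})$ it lies in. The workhorse is the following claim: every $x\in\bar S$ admits a unique representation $x=ds+kv_{h+1}$ with $s\in S$ and $k\in\{0,1,\dots,d-1\}$. Existence follows by starting from an arbitrary representation with $k\in\N$ and repeatedly replacing $(s,k)$ with $(s+v_{h+1},k-d)$ while $k\ge d$; this preserves membership because $s+v_{h+1}>v_{h+1}>d\F(S)\ge\F(S)$, hence $s+v_{h+1}\in S$. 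Uniqueness follows from $\gcd(d,v_{h+1})=1$: two representations differ by $(s,k)\mapsto(s-jv_{h+1},k+jd)$ for some $j\in\Z$, and the range restriction on $k$ forces $j=0$. Consequently, for any $x\in\N$ one may define the unique $k(x)\in\{0,\dots,d-1\}$ with $k(x)v_{h+1}\equiv x\pmod d$ and $s(x):=(x-k(x)v_{h+1})/d\in\Z$, and assert: $x\in\bar S$ iff $s(x)\in S$ (which in particular requires $s(x)\ge 0$); equivalently, $x$ is a gap iff either $s(x)<0$ or $s(x)\in\N\setminus S$.

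The classification of gaps then splits by the interval in which $x$ lies. In the range $[1,v_{h+1})$, if $k(x)\ge 1$ then $s(x)<0$, so $x$ is automatically a gap; if $k(x)=0$, then $x=d\,s(x)$, hence $x$ is a gap iff $s(x)\in\N\setminus S$. Together this says the gaps in $[1,v_{h+1})$ are exactly $[1,v_{h+1})\setminus dS$, which decomposes (since $dS\cap[0,dv_0]=\{0,dv_0\}$) into the first two pieces of (\ref{gaps}). For $x\ge v_{h+1}$ with $k(x)=0$, one has $x=dy$ with $y\ge v_{h+1}/d>\F(S)$, so $y\in S$ and $x\in dS\subseteq\bar S$: no gaps here. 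For $x\ge v_{h+1}$ with $k=k(x)\in\{1,\dots,d-1\}$, the sub-case $s(x)<0$ forces $v_{h+1}\le x<kv_{h+1}$ (whence $k\ge 2$); parametrising such $x$ as $x=v_{h+1}+r+jd$ with $r=((k-1)v_{h+1})\bmod d$ and $0\le j\le\lfloor(k-1)v_{h+1}/d\rfloor-1$ recovers exactly ${\cal B}_{d,k-1}$. The sub-case $s(x)\ge 0$, $s(x)\in\N\setminus S$ produces $x=kv_{h+1}+d\,s(x)\in kv_{h+1}+d(\N\setminus S)$, which is the $k$-th summand of ${\cal A}_d$.

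For the partition claim, I would check pairwise disjointness: the two small-range pieces lie in $[1,v_{h+1})$ and are separated from everything above; within $[v_{h+1},\infty)$ distinct values of $k(x)$ give distinct residues modulo $d$ (so the summands of ${\cal A}_d$ among themselves, and the sets ${\cal B}_{d,\ell}$ among themselves, are mutually disjoint), while ${\cal B}_{d,k-1}\subseteq[v_{h+1},kv_{h+1})$ whereas the $k$-th summand of ${\cal A}_d$ sits in $[kv_{h+1}+d,\infty)$. The main obstacle is the canonical-form step: one must pin down precisely where the hypothesis $v_{h+1}>d\F(S)$ (rather than just $v_{h+1}>\F(S)$) enters, namely in ruling out gaps in the case $k(x)=0$, $x\ge v_{h+1}$. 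The rest is careful residue-versus-interval bookkeeping.
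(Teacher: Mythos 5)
Your argument is correct, and it is organized around a different pivot than the paper's proof. The paper proceeds in two separate passes: it first shows each of the four pieces is contained in $\N\setminus\bar S$ by ad hoc contradiction arguments (each using coprimality of $d$ and $v_{h+1}$), and then shows no gap is missed via a sequence of claims that identify, within each residue class $[kv_{h+1}]_d$, the largest gap ($\max{\cal A}_{d,k}$) and account for all gaps below it. You instead prove a single structural lemma — every element of the gluing $\bar S$ has a unique expression $ds+kv_{h+1}$ with $s\in S$ and $0\le k\le d-1$ — which converts membership into the test ``$s(x)\in S$'' and makes both inclusions, as well as the disjointness, fall out of one exhaustive and mutually exclusive case analysis on $(k(x),\operatorname{sign} s(x))$ and the interval containing $x$. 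Your route is arguably cleaner: it localizes the use of the hypothesis $v_{h+1}>d\F(S)$ to the single sub-case $k(x)=0$, $x\ge v_{h+1}$, and the partition claim becomes automatic rather than requiring the separate congruence-plus-interval checks of the paper. What the paper's organization buys in exchange is that it explicitly exhibits $\max{\cal A}_{d,k}$ and $\max{\cal B}_{d,\ell}$ along the way, which it then reuses verbatim for the Frobenius-number corollary; your version yields the same corollary, but only after reading off the maxima from your parametrizations. The one point worth writing out fully if you formalize this is the existence half of the normal-form lemma (the descent $(s,k)\mapsto(s+v_{h+1},k-d)$), since it is exactly there that one needs $v_{h+1}>\F(S)$ to keep $s$ inside $S$; your sketch handles this correctly.
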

\begin{proof}
It is clear that $\{1,\ldots ,dv_0-1 \}$ is included in $\N\setminus \bar S$.

Consider $x\in (dv_0,v_{h+1})\cap \N$ such that $x\notin dS$, and suppose that $x\in \bar S$. Since $x<v_{h+1}$ then there are $\lambda_{i}$ with $0\leq i\leq h$ such that $x=\lambda_{0}dv_{0}+\cdots+\lambda_{h}dv_{h}=d(\lambda_{0}v_{0}+\cdots+\lambda_{h}v_{h})\in dS$, which is a contradiction. Hence we conclude that $\{ x\in (dv_0,v_{h+1})\cap \N \;:\; x\notin dS \}\subseteq \N\setminus \bar S$.

Suppose that $S\neq \bN$. Fix $1\leq k \leq d-1$ and let $x\in d(\N\setminus S)+kv_{h+1}$. We get $x=d\alpha+kv_{h+1}$, for some $\alpha\in \N\setminus S$. Suppose that $x\in \bar S$. So, there exist $\alpha_1,\ldots ,\alpha_h,\beta\in \N$ such that $x=d\alpha+kv_{h+1}=d\alpha_1v_1+\cdots + d\alpha_hv_h+\beta v_{h+1}$ and then $(k-\beta)v_{h+1}= d(\alpha_1v_1+\cdots + \alpha_hv_h-\alpha)$. If $k=\beta$, the element $\alpha$ have to belong to $S$ which it is not possible. Moreover, since $d$ and $v_{h+1}$ are coprime, $d$ divides $k-\beta$. If $\beta>k$, $d\alpha= d(\alpha_1v_1+\cdots + \alpha_hv_h)+(\beta-k) v_{h+1}$ with $v_{h+1}\in S$, that is, $\alpha \in S$. Again, it is not possible. If we assume $k>\beta$ then $k-\beta \ge d$ and $k\ge d$. In any case, the set $d(\N\setminus S)+kv_{h+1}$ is included in $\N\setminus \bar S$ for any integer $k$ in $[1,d-1]_{\bN}$.

Let us prove now that ${\cal B}_{d,\ell}\subseteq \N\setminus \bar S$.
Suppose that $x=v_{h+1}+[\ell v_{h+1}\;\mathrm{mod}\;d]+kd\in \bar S$ for some $1\leq \ell \leq d-2$ and $0\leq k\leq\left \lfloor\frac{ \ell v_{h+1}}{d}\right \rfloor-1$.
Let $\alpha_{0},\alpha_{1},\ldots,\alpha_{h+1}\in \bN$ such that $x=v_{h+1}+[\ell v_{h+1}\;\mathrm{mod}\;d]+kd=\alpha_{0}dv_{0}+\cdots+\alpha_{h}dv_{h}+\alpha_{h+1}v_{h+1}$. Hence,
$(\alpha_{h+1}-1)v_{h+1}-[\ell v_{h+1}\;\mathrm{mod}\;d]=d(k-\alpha_0v_0-\cdots-\alpha_hv_h)$  and
$[(\alpha_{h+1}-1-\ell)v_{h+1}]_d=[0]_d$. Since $d$ and $v_{h+1}$ are coprime then $d$ divides $\alpha_{h+1}-1-\ell$.
But $\max {\cal B}_{d,\ell}=(\ell +1)v_{h+1}-d$ so we have $\alpha_{h+1}\in\{0,1,\ldots,\ell\}$, hence $-1-\ell\leq \alpha_{h+1}-1-\ell \leq -1$ or equivalently $1+\ell \geq -\alpha_{h+1}+1+\ell \geq 1$ and $-\alpha_{h+1}+1+\ell$ is a multiple of $d$ which is a contradiction since $\ell<d-1$.

Taking into account the reasoning done so far we have
\[{\cal H}=\{1,\ldots ,dv_0-1 \}\cup \{ x\in (dv_0,v_{h+1})\cap \N \mid x\notin dS \} \cup {\cal A}_{d} \cup  \bigcup_{\ell=1}^{d-2} {\cal B}_{d,\ell}\subseteq \N\setminus \bar S.\]

Let us prove that  ${\cal H}$  is a partition (we do not write ${\cal A}_d$ or ${\cal B}_{d,\ell}$ when they are the emptyset).

When ${\cal A}_{d}$ is a nonempty set, let ${\cal A}_{d,k}=d(\N\setminus S)+kv_{h+1}$ for a fix $1\leq k\leq d-1$. In this case, if ${\cal B}_{d,\ell}$ is nonempty  we have
\begin{equation}
\label{minmax}
\max {\cal B}_{d,\ell}<\min {\cal A}_{d,\ell+1}\;\;\hbox{\rm for } 1\leq \ell \leq d-2.
\end{equation}

Observe that
\begin{equation}
\label{congr}
 [x]_{d}=[kv_{h+1}]_{d}\; \hbox{\rm for any } x\in {\cal A}_{d,k}\;
\end{equation} \hbox{\rm and }
\begin{equation}
\label{congr1}
 [y]_{d}=[(\ell +1)v_{h+1}]_{d} \; \hbox{\rm for any } y\in {\cal B}_{d,\ell}.
 \end{equation}

Since $1\leq k,\ell<d$
we get that any two sets ${\cal A}_{d,k}$ and ${\cal A}_{d,k'}$ are disjoint for $k\neq k'$ and any two sets ${\cal B}_{d,\ell}$ and ${\cal B}_{d,\ell'}$ are also disjoint for $\ell \neq \ell'$. Moreover ${\cal A}_d$ and ${\cal B}_{d,\ell}$ are also disjoint for any $1\leq \ell \leq d-2$. Indeed, let $x\in {\cal A}_{d}\cap {\cal B}_{d,\ell}$ for some $1\leq \ell \leq d-2$. Hence, there is $k\in \{1,\ldots,d-1\}$ such that $x\in {\cal A}_{d,k}$ and by (\ref{congr}) and (\ref{congr1}), $[x]_{d}=[kv_{h+1}]_d=[(\ell +1)v_{h+1}]_d$. Given that $d$ and $v_{h+1}$ are coprime and $1\leq k,\ell <d$ we get $k=\ell +1$. So $x\in {\cal A}_{d,\ell+1}\cap {\cal B}_{d,\ell}$, which is a contradiction by inequality (\ref{minmax}).

In order to finish the proof we will show that there is not a gap of $\bar S$ outside ${\cal H}$.

First at all, observe that if $x\in \N\setminus \bar S$ and $x<v_{h+1}$, $x\in \{1,\ldots ,dv_0-1 \}\bigcup \{ x\in (dv_0,v_{h+1})\cap \N \mid x\notin dS \}$.\\

{ Claim 1:} if $x\in \N\setminus \bar S$ and $v_{h+1}<x$ then $[x]_{d}= [kv_{h+1}]_d$, for some $k\in \{1,\ldots,d-1\}$.

Indeed, if we suppose that $x=\lambda d$ for some $\lambda\in \N$, by hypothesis we get $d\F(S)<v_{h+1}<x=\lambda d$, in particular $\lambda>\F(S)$, so $x\in dS\subset \bar S$.
Since $[x]_{d}\neq [0]_d$ and $\gcd(d,v_{h+1})=1$ we get $[x]_{d}\in \{ [1]_d,\ldots ,[d-1]_d\}=\{ [kv_{h+1}]_d\;:\; 1\le k \le d-1\}$, that is, any $x\in \N\setminus \bar S$ with $v_{h+1}<x$ is congruent with $kv_{h+1}$ module $d$ for some integer $k\in \{1,\ldots,d-1\}$.\\

We distinguish two cases, depending on ${\cal A}_d$. First, we suppose that ${\cal A}_d\neq \emptyset$.\\

{ Claim 2:} The greatest gap of $\bar S$ which is congruent with $kv_{h+1}$ modulo $d$ is $\max {\cal A}_{d,k}$, for $1\leq k\leq d-1$.\\
Let $x\in \N\setminus \bar S$ with $[x]_{d}=[kv_{h+1}]_{d}$ and $x> \max {\cal A}_{d,k}$ then  $x=d\F(S)+kv_{h+1}+\lambda d$ for some non zero natural number $\lambda$. So $x=d(\F(S)+\lambda)+kv_{h+1}\in \bar S$ since $\F(S)+\lambda\in S$.\\

{ Claim 3:} There are not gaps of $\bar S$ congruent with $(\ell+1)v_{h+1}$ modulo $d$, between $\max {\cal B}_{d,\ell}$ and $\min A_{d,\ell+1}$.\\
Remember that $[\max {\cal B}_{d,\ell}]_d=[\min A_{d,\ell+1}]_d=[(\ell+1)v_{h+1}]$. Suppose that $x\in \N \setminus \bar S$ with $\max {\cal B}_{d,\ell}<x<\min A_{d,\ell+1}$ and $[x]_d=[(\ell+1)v_{h+1}]_d$. Since $\max {\cal B}_{d,\ell}=(\ell+1)v_{h+1}-d$ and $\min A_{d,\ell+1}=(\ell+1)v_{h+1}+d$ the only possibility for $x$ is $(\ell+1)v_{h+1}$ which is an element of $\bar S$.\\

By Claims 1 and 2  we deduce that for any $x\in \N\setminus \bar S$ with $v_{h+1}<x$  there exists an integer $k_{0}\in \{1,\ldots,d-1\}$ such that $[x]_{d}=[k_0v_{h+1}]_d$ and $v_{h+1}<x\le \max {\cal A}_{d,k_0}$. In particular there is an integer number $\lambda$ such that $x=k_0v_{h+1}+\lambda d$.  Hence if $x\in [\min {\cal A}_{d,k_0}, \max {\cal A}_{d,k_0}]_{\bN}$ then $x\in {\cal A}_{d,k_0}$. Indeed, in this case $\lambda\in \N$ and $\lambda\not\in S$, otherwise $x\in \bar S$.\\

By Claim 3, we can assume that if $v_{h+1}<x<\min {\cal A}_{d,k_0}$ then $v_{h+1}<x\le \max {\cal B}_{d,k_0-1}=k_0v_{h+1}-d$. \\

{ Claim 4:} The set of all the integers in $(v_{h+1},\max {\cal B}_{d,k_0-1}]$ congruent with $k_0v_{h+1}$ module $d$ is $ {\cal B}_{d,k_0-1}$.\\
By \eqref{congr1} we have $[\max {\cal B}_{d,k_0-1}]_{d}=[k_{0}v_{h+1}]_{d}$. Moreover
\[
\{\max {\cal B}_{d,k_0-1},\, \max {\cal B}_{d,k_0-1}-d, \max {\cal B}_{d,k_0-1}-2d\ldots ,\min {\cal B}_{d,k_0-1}\}={\cal B}_{d,k_0-1}
\]
 and  $\min {\cal B}_{d,k_0-1}-d= v_{h+1}+[(k_0-1)v_{h+1}\mod d]-d <v_{h+1}$.\\

Hence $x$ has to belong to ${\cal B}_{d,k_0-1}$ and we finish the proof for the case ${\cal A}_d\neq \emptyset$.

Suppose now that ${\cal A}_d=\emptyset$,  that is $S=\N$ and $\bar S$ is generated by $d$ and $v_{1}$ ($h=0$).

{ Claim 5:} If  ${\cal A}_d= \emptyset$ then $\max {\cal B}_{d,\ell}$ is the greatest gap of $\bar S$ which is congruent with $(\ell+1)v_{h+1}$ modulo $d$, for $1\leq \ell \leq d-2$.\\
Observe that $\max {\cal B}_{d,\ell}=(\ell +1)v_1-d$. For any natural number $x>\max {\cal B}_{d,\ell}$ with
$[x]_d=[(\ell +1)v_1]_d,$ there is $\alpha\in \bN\setminus\{0\}$ such that $x=(\ell +1)v_1-d+\alpha d=(\ell +1)v_1+(\alpha-1) d\in \bar S$.
\end{proof}

The above result provides us an explicit formula for the gaps except the elements of ${\cal A}_d$. We now give some examples of GSI-semigroups where the set ${\cal A}_d$ is easily known.

\begin{example}
    Let $S=\langle 2,7\rangle$. We have $\N\setminus S=\{1,3,5\}$ and $\F(S)=5$.  Take now $d=2$ and $\gamma=15$. Since $\gamma > \max\{2\cdot 5, 2\cdot 7\}$, the semigroup $S\oplus_{2,15}\N$ is a GSI-semigroup.
    The set ${\cal A}_2$ is equal to $2\{1,3,5\}+1 \cdot 15=\{17,21,25\}$ and therefore $\F(S\oplus_{2,15}\N)=25$.
\end{example}

\begin{example}
	Consider now the semigroup $S=\langle 5,6,7,8,9\rangle$, $d=3$ and $\gamma=31$.
	We have $\N\setminus S=\{1,2,3,4\}$ and $\F(S)=4$.
	Since $\gamma > \max\{3\cdot 4, 3\cdot 9\}$, the semigroup $S\oplus_{3,31}\N$ is a GSI-semigroup and ${\cal A}_3=(3\{1,2,3,4\}+1 \cdot 31)\cup (3\{1,2,3,4\}+2 \cdot 31)=\{34,37,40,43,65,68,71,74\}$. Thus, $\F(S\oplus_{3,31}\N)=74$.
\end{example}

\begin{corollary}
Let $S=\langle v_0,\ldots ,v_h\rangle$ be a semigroup, and $d,v_{h+1}\in \N$ two natural numbers such that $\bar S= \langle dv_0,\ldots ,dv_h,v_{h+1}\rangle$ is a GSI-semigroup. Then
 \[
 \F(\bar S)=\left\{\begin{array}{ll}\max {\cal A}_d& \hbox{\rm if } {\cal A}_d\neq \emptyset\\
\max {\cal B}_{d,d-2} &  \hbox{\rm otherwise, }
\end{array}\right.
\]
where $ {\cal A}_d$ and ${\cal B}_{d,d-2}$ are from (\ref{gaps}).
\end{corollary}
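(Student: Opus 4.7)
The plan is to read $\F(\bar S)$ directly off the disjoint-union description of $\N\setminus\bar S$ provided by Theorem \ref{th:huecos}: the Frobenius number equals the maximum of a finite partition, hence the maximum over the maxima of the four constituent pieces. So the entire proof reduces to computing four numbers and comparing them.

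First I would compute each of these maxima. The interval $\{1,\dots,dv_0-1\}$ contributes $dv_0-1$, and the piece $\{x\in(dv_0,v_{h+1})\cap\N : x\notin dS\}$ is trivially bounded above by $v_{h+1}-1$. For each ${\cal B}_{d,\ell}$, expanding the largest admissible index $k=\lfloor\ell v_{h+1}/d\rfloor-1$ via the identity $a = d\lfloor a/d\rfloor + [a\;\mathrm{mod}\;d]$ collapses the formula to $\max{\cal B}_{d,\ell}=(\ell+1)v_{h+1}-d$, so the maximum of $\bigcup_{\ell=1}^{d-2}{\cal B}_{d,\ell}$ is $(d-1)v_{h+1}-d$, attained at $\ell=d-2$. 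For ${\cal A}_d$, using that $\F(S) = \max(\N\setminus S)$ gives $\max{\cal A}_d = d\F(S)+(d-1)v_{h+1}$.

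Then I would split into two cases. When ${\cal A}_d\neq\emptyset$ (equivalently $S\neq\N$), the bound $\F(S)\geq 1$ immediately gives $\max{\cal A}_d \geq d + (d-1)v_{h+1}$, which exceeds $(d-1)v_{h+1}-d$ and trivially beats the two pieces sitting below $v_{h+1}$; this yields the first branch of the corollary. When ${\cal A}_d=\emptyset$, so $S=\N$ and $\bar S=\langle d,v_{h+1}\rangle$, a short comparison using the hypothesis $v_{h+1}>dv_h=d$ shows that $(d-1)v_{h+1}-d$ dominates both $dv_0-1=d-1$ and $v_{h+1}-1$, giving the second branch.

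There is no conceptual obstacle here; the only step that requires any care is the corner case $d=2$, $S=\N$, where the index range $1\leq\ell\leq d-2$ is empty and the symbol $\max{\cal B}_{d,d-2}$ must be interpreted either as the value obtained by formally evaluating $(\ell+1)v_{h+1}-d$ at $\ell=0$, or cross-checked against Sylvester's identity $\F(\langle 2,v_{h+1}\rangle)=v_{h+1}-2$; both conventions yield the same answer, so the statement holds uniformly.
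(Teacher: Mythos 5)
Your proof is correct and follows essentially the same route as the paper: both read $\F(\bar S)$ off the partition of $\N\setminus\bar S$ in Theorem \ref{th:huecos}, using $\max{\cal A}_d=d\F(S)+(d-1)v_{h+1}$ and $\max{\cal B}_{d,\ell}=(\ell+1)v_{h+1}-d$ and comparing the pieces (the paper's second case invokes Sylvester's formula $\F(\langle d,v_1\rangle)=(d-1)(v_1-1)-1$ rather than comparing maxima, but this is the same computation). Your explicit treatment of the degenerate case $d=2$, $S=\N$, where the range $1\leq\ell\leq d-2$ is empty and $\max{\cal B}_{d,d-2}$ must be read as the formal value $v_{h+1}-2$, is a point the paper's own proof silently glosses over.
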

\begin{proof}
If  ${\cal A}_d\neq \emptyset$, then, by inequality (\ref{minmax}) , $\F(\bar S)=\max{\cal A}_d=d\F(S)+(d-1)v_{h+1}$. Otherwise, $S=\N$ and $\bar S$ is generated by $d$ and $v_{1}$ ($h=0$). So $\F(\bar S)=(d-1)(v_1-1)-1=\max{\cal B}_{d,d-2}.$
\end{proof}

From the proof of Theorem \ref{th:huecos}, we obtain the Frobenius number of a GSI-semigroup $S\oplus_{d,\gamma}\N$, which is equal to \begin{equation}\label{FrobGSI}
\F(S\oplus_{d,\gamma}\N)=d\F(S)+(d-1)\gamma.\end{equation}

\section{Algorithms for GSI-semigroups}\label{seccion3}
We finish this work with some algorithms for computing GSI-semigroups. These algorithms focus on computing the GSI-semigroups up to a given Frobenius number, and on checking whether there is at least one GSI-semigroup with a given even Frobenius number. For any odd number, there is a GSI-semigroup with this number as its Frobenius number, however, this does not happen for a given even number. Thus, in this section we dedicate a special study to GSI-semigroups with even Frobenius number.

Algorithm \ref{algoritmo1} computes the set of GSI-semigroups with Frobenius number least than or equal to a fixed nonnegative integer.
Note that in step 5 of the algorithm we use that $\F(\bar S)=d\F(S)+(d-1)\gamma$ and $\gamma >d\F(S)$ implies that $\F(\bar S)\geq d^2\F(S)$ where $\bar S=S\oplus_{d,\gamma}\N$.

Denote by $M(S)$ the largest element of the minimal system of generators of a numerical semigroup $S$.

\begin{algorithm}[h]
	\BlankLine
	\KwData{$f\in\N\setminus\{0\}$.}
	\KwResult{The set $\{ \bar S\mid \bar S\textrm{ is a GSI-semigroup with } \F(\bar S)\leq f \}$. }
	\BlankLine
	${\cal A}=\emptyset$\;
	\ForAll {$k\in\{-1\}\cup\{1,2,\dots,f\}$}
	{\label{loop1}
		$B=\{S\mid \F(S)=k\}$ \;
		\ForAll{$S\in B$ }
		{\label{loop2}
			$D_{S}=\{d\in\N\setminus\{0,1\}\mid d^2\F(S)\leq f \}$  \;
			$G_{d,S}=\{(d,\gamma) \in D_{S}\times \N \mid \gcd(\gamma,d)=1,\,\gamma> \max\{d \F(S),d M(S) \},\, d\F(S)+(d-1)\gamma\leq f \}$ \;
		}
		${\cal A}={\cal A}\cup \{ S\oplus_{d,\gamma}\N\mid (d,\gamma)\in G_{d,S}\}$\;
	}
	\Return ${\cal A}$ \;
	\caption{Computation of the set of GSI-semigroups with Frobenius number least than or equal to $f$.}\label{algoritmo1}
\end{algorithm}

\begin{remark}
If $A$ is a minimal system of generators of a numerical semigroup $S$ and $d\in\N\setminus \{0,1\}$, then $dA$ is a minimal system of generators of $dS=\{ds\mid s\in S\}\subset d\N$. Furthermore, if $\gamma\in\N\setminus\{1\}$ and $\gcd(d,\gamma)=1$, then $\gamma\not\in d\N\setminus\{0\}$. Thus, $\gamma\not\in dS$ and $dA\cup \{\gamma\}$ is a minimal system of generators of $\langle dA \cup \{\gamma\}\rangle $.
\end{remark}

We give in Table \ref{GSI_semig_hasta_15} all the GSI-semigroups with Frobenius number least than or equal to $15$.

\begin{table}[H]
\centering
\begin{tabular}{|c|c|}
\hline
Frobenius number & Set of GSI-semigroups \\\hline
$1$ & $\{\langle 2, 3\rangle\}$ \\\hline
$2$ & $\emptyset$\\\hline
$3$ & $\{\langle 2,5 \rangle\}$ \\\hline
$4$ & $\emptyset$\\\hline
$5$ & $\{\langle 2, 7\rangle, \langle 3, 4\rangle\}$ \\\hline
$6$ & $\emptyset$\\\hline
$7$ & $\{\langle 2, 9\rangle, \langle 3, 5\rangle\}$ \\\hline
$8$ & $\emptyset$\\\hline
$9$ & $\{\langle 2, 11\rangle, \langle 4, 6, 7 \rangle\}$ \\\hline
$10$ & $\emptyset$\\\hline
$11$ &  $\{\langle 2, 13\rangle, \langle 3, 7\rangle, \langle 4, 5\rangle, \langle 4, 6, 9\rangle\}$ \\\hline
$12$ & $\emptyset$\\\hline
$13$ &  $\{\langle 2, 15\rangle, \langle 3, 8\rangle, \langle 4, 6, 11\rangle\}$\\\hline
$14$ & $\emptyset$\\\hline
$15$ &  $\{\langle 2, 17\rangle, \langle 4, 6, 13\rangle, \langle 6, 8, 10, 11\rangle\}$\\
\hline
\end{tabular}\caption{Sets of GSI-semigroups with Frobenius number up to 15.}\label{GSI_semig_hasta_15}
\end{table}

Remember that every numerical semigroup generated by two elements is a GSI-semigroup. Hence, for any odd natural number there exists at least one GSI-semigroup with such Frobenius number.

From Table \ref{GSI_semig_hasta_15}, one might think that there are no GSI-semigrups with even Frobenius number. This is not so and we can check that $\langle 9,12,15,16 \rangle=\langle 3,4,5 \rangle \oplus_{3,16}\N$ is a GSI-semigroup and its Frobenius number is $38$,
\begin{verbatim}
gap> FrobeniusNumber(NumericalSemigroup(9,12,15,16));
38
gap> IsGSI(NumericalSemigroup(9,12,15,16));
true
\end{verbatim}
This is the first even integer that is realizable as the Frobenius number of a GSI-semigroup.
We explain this fact: 
we want to obtain an even number $f$ from the formula (\ref{FrobGSI}), $f=\F(S\oplus_{d,\gamma}\N )=d \F(S)+(d-1)\gamma$. Since $\gcd(d,\gamma)=1$, then $d$ has to be odd and $\F(S)$  even.
Thus, the lowest number $f$ is obtained for the numerical semigroup $S$ with the smallest even Frobenius number, the smallest odd number $d\ge 3$ and the smaller feasible integer $\gamma$, that is, $S=\langle 3,4,5\rangle$, $d=3$ and $\gamma=16$.
Thus, the GSI-semigroup with the minimum even Frobenius number is $\langle 3,4,5\rangle \oplus_{3,16}\N$.

Note that not every even number is obtained as the Frobenius number of a GSI-semigroup $\langle 3,4,5\rangle\oplus_{3,\gamma}\N$ for some $\gamma\ge 16$ with $\gcd(d,\gamma)=1$. In this way, we only obtain the values of the form $36+2k$ with $k\in \N$ and $k\not \equiv 0 \mod 3$ (if $\gamma=16+k$ for $k\in \N$, $\F(\langle 3,4,5\rangle \oplus_{3,\gamma}\N)= 38+2k $). The numbers of the form $42 + 6 k$, with $k\in \N$,  are not obtained (see Table \ref{tt1}).

\begin{table}[H]
\centering
\begin{tabular}{c|c|c|c|c|c|c|c|c}
$\gamma$     & $16$ & $17$ & $18$ & $19$ & $20$ & $21$ & $22$ & $\dots$\\\hline
$\F( \langle 3,4,5\rangle \oplus_{3,\gamma}\N )$     & $38$ & $40$ & * & $44$ & $46$ & * & $50$ & $\dots$
\end{tabular}\caption{Values of $\gamma $ such that $\gcd(3,\gamma)\neq 1$ are marked with *.}\label{tt1}
\end{table}

We now look for GSI-semigroups with Frobenius number of the form $42+6k$. Reasoning as above, we use again the semigroup $\langle 3,4,5\rangle$ and set now $d=5$. In this case, the smallest Frobenius number is 114, and it is given by the semigroup $\langle 3,4,5\rangle\oplus_{5,26}\N$. In general, for the semigroups $\langle 3,4,5\rangle\oplus_{5,\gamma}\N$, the formula of their Frobenius numbers is $5 \cdot 2+ 4\gamma$ with $\gamma\geq 26$ and $\gamma \not \equiv 0 \mod 5$ (see Table \ref{tt2}).
For $S=\langle 3,4,5\rangle$, we fill all the even Frobenius number $f\geq 114$, excepting if $f$ is of the form $f=10+4k$ with $k=15k'$. That is, $f$ cannot be a number of the form $f=10+60k'$ with $k'\in \N\setminus\{0,1\}$, for instance $130$ and $190$.

\begin{table}[H]
\centering
\begin{tabular}{c|c|c|c|c|c|c|c|c}
$\gamma$  & $26$ & $27$ & $28$ & $29$ & $30$ & $31$ & $32$ & $\dots$\\\hline
$\F( \langle 3,4,5\rangle \oplus_{5,\gamma}\N )$     & $114$ & $118$ & $122$ & $126$ & * & $134$ & $138$ & $\dots$
\end{tabular}\caption{Values $\gamma $ such that $\gcd(5,\gamma)\neq 1$ are marked with *.}\label{tt2}
\end{table}

The above procedures are useful to construct GSI-semigroups with even Frobenius numbers, but with them we cannot determine if a given even positive integer is realizable as the Frobenius number of a GSI-semigroup.

Fixed an even number $f$, we are interested in providing an algorithm to check if there exists at least one GSI-semigroup $S\oplus_{d,\gamma}\N$ such that $\F(S\oplus_{d,\gamma}\N)=f$.

Using that $\gamma$ has to be greater than or equal to $d\F(S)+1\ge 3\F(S)+1$ (recall that $\gamma>\max\{d\F(S),dM(S)\}$ and $d\ge 3$)
and from formula (\ref{FrobGSI}), we obtain that if $\F(S\oplus_{d,\gamma}\N)=f$, then $2\le \F(S)\le \lfloor \frac{f-2}{9}\rfloor$. 

Let $t\in \left[2,\frac{f-2}{9}\right]$ be the Frobenius number of $S$. Hence,  $f=dt+(d-1)\gamma\ge d^2t+d-1$ and $d\in\left[3, \left\lfloor\frac{-1+\sqrt{4ft+4t+1}}{2t}\right\rfloor\right]$. Therefore, for $t\in \left[2,\frac{f-2}{9}\right]$ and $d\in\left[3, \left\lfloor\frac{-1+\sqrt{4ft+4t+1}}{2t}\right\rfloor\right]$, $\gamma$ equals $\frac{f-dt}{d-1}$.

The next lemma follows from the previous considerations.

\begin{lemma}
Given an even number $f$, $S\oplus_{d,\gamma}\N$ is a GSI-semigroup with Frobenius number $f$ if and only if $\F(S)$ is an even number belonging to $\left[2,\frac{f-2}{9}\right]$, $d$ is an odd number verifying $$d\in\left[3, \left\lfloor\frac{-1+\sqrt{4f\F(S)+4\F(S)+1}}{2\F(S)}\right\rfloor\right],$$ and $\gamma=\frac{f-d\F(S)}{d-1}$ is an integer number such that $\gcd(\gamma,d)=1$ and $\gamma>\max\{d\F(S),dM(S)\}$.
\end{lemma}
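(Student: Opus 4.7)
The plan is to read off both implications from the explicit Frobenius formula $\F(S\oplus_{d,\gamma}\N)=d\F(S)+(d-1)\gamma$ of equation~(\ref{FrobGSI}), combined with a parity argument and the definitional inequality $\gamma>d\F(S)$ of a GSI-semigroup.

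For the forward implication I would first determine the parities of $d$ and $\F(S)$. Since $\gcd(d,\gamma)=1$, they cannot both be even. If $d$ were even then $\gamma$ would be odd, so $d\F(S)+(d-1)\gamma$ would be even plus odd, hence odd, contradicting that $f$ is even. Therefore $d$ is odd, and together with $d\geq 2$ from the definition this gives $d\geq 3$. Then $(d-1)\gamma$ is even, so $d\F(S)$ must be even, and $d$ odd forces $\F(S)$ to be even. Since $S=\N$ would yield $\F(S)=-1$, we conclude $S\neq \N$ and hence $\F(S)\geq 2$.

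Next I would turn the strict inequality $\gamma>d\F(S)$ into $\gamma\geq d\F(S)+1$ and plug it into (\ref{FrobGSI}) to get
\[
f\;=\;d\F(S)+(d-1)\gamma\;\geq\;d\F(S)+(d-1)(d\F(S)+1)\;=\;d^{2}\F(S)+d-1.
\]
Using $d\geq 3$ this becomes $f\geq 9\F(S)+2$, i.e.\ $\F(S)\leq (f-2)/9$. Viewed as a quadratic inequality in $d$, $\F(S)d^{2}+d-(f+1)\leq 0$ has positive root $\frac{-1+\sqrt{4f\F(S)+4\F(S)+1}}{2\F(S)}$, and since $d$ is a positive integer it is bounded above by the floor of this number, as claimed. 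Finally, solving the Frobenius equation for $\gamma$ gives $\gamma=(f-d\F(S))/(d-1)$, which must be a positive integer with $\gcd(\gamma,d)=1$ and $\gamma>\max\{d\F(S),dM(S)\}$ by the very definition of a GSI-semigroup.

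For the converse, the listed hypotheses match the definition of a GSI-semigroup term by term: $d\geq 3>1$, $\gcd(d,\gamma)=1$, and $\gamma>\max\{d\F(S),dM(S)\}$. Hence $\bar S=S\oplus_{d,\gamma}\N$ is a GSI-semigroup, and substituting $\gamma=(f-d\F(S))/(d-1)$ into (\ref{FrobGSI}) yields $\F(\bar S)=d\F(S)+(f-d\F(S))=f$, concluding the argument. The proof is essentially bookkeeping around (\ref{FrobGSI}); the only mildly delicate point is the parity step forcing $d$ odd and $\F(S)$ even, which in turn powers the explicit interval bounds.
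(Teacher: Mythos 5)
Your proposal is correct and follows essentially the same route as the paper: the parity argument forcing $d$ odd and $\F(S)$ even, the bound $f\ge d^2\F(S)+d-1$ obtained from $\gamma\ge d\F(S)+1$, the resulting quadratic inequality in $d$, and solving (\ref{FrobGSI}) for $\gamma$ are exactly the "previous considerations" from which the paper says the lemma follows. No gaps.
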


We present a family formed by semigroups $S$ of even Frobenius number with $\F(S)\geq 10$ and such that $M(S)\leq \F(S)$.

\begin{proposition}\label{S_f}
For every even number $f\geq 10$, the numerical semigroup $S_f$ minimally generated by $A=\{f/2 -1,f/2 +2,f/2 +3,\dots,2(f/2 -1)-1,2(f/2 -1)+1\}$ has Frobenius number equal to $f$.
\end{proposition}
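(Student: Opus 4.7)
The plan is to introduce the abbreviation $m = f/2 - 1$, so that $f = 2m+2$ and the generating set rewrites as $A = \{m\} \cup \{m+3, m+4, \ldots, 2m-1\} \cup \{2m+1\}$. The hypothesis $f \geq 10$ is equivalent to $m \geq 4$, which guarantees both that the middle block is nonempty and that $m + 3 \leq 2m - 1$. A preliminary check shows that $A$ generates a numerical semigroup, since $\gcd(m, 2m+1) = 1$, and that $A$ is a minimal system of generators: every $a \in A$ satisfies $a \leq 2m+1$, whereas every sum of two or more elements of $A$ is at least $2m$, and the only element of $A$ lying in $[2m,2m+1]$ is $2m+1$ itself, which cannot be written as $a+b$ with $a,b \in A$, since this would force $\{a,b\} = \{m, m+1\}$ but $m+1 \notin A$.

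Next I would show that $f = 2m+2 \notin S_f$. If $2m+2 = \sum n_i a_i$ with $a_i \in A$ and $\sum n_i \geq 2$, then three or more summands would give a total at least $3m > 2m+2$ (using $m \geq 4$), so exactly two summands are used, both $\geq m$; this forces one equal to $m$ and the other equal to $m+2$, but $m+2 \notin A$. Hence $2m+2$ is a gap, and $\F(S_f) \geq 2m+2 = f$.

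The main step is to prove that every integer $n > 2m+2$ lies in $S_f$. For this, I would exhibit $m$ consecutive integers in $S_f$ starting at $2m+3$, namely $2m+k = m + (m+k)$ for $3 \leq k \leq m-1$, together with $3m = m+m+m$, $3m+1 = m + (2m+1)$, and $3m+2 = (m+3) + (2m-1)$. Since $m \in S_f$, translating the interval $\{2m+3, \ldots, 3m+2\}$ by successive multiples of $m$ covers all integers $n \geq 2m+3$, giving $\F(S_f) \leq 2m+2$ and therefore equality.

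The only delicate identity, and the one pinning down the hypothesis $f \geq 10$, is $3m+2 = (m+3) + (2m-1)$: this requires both $m+3 \in A$ and $2m-1 \in A$, i.e.\ $m \geq 4$. Every other identity in the bridging interval is immediate from the shape of $A$, so once this one is handled the argument closes cleanly.
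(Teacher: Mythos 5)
Your proof is correct and follows essentially the same route as the paper's: show $f\notin S_f$, then exhibit the $f/2-1$ consecutive elements $f+1,\dots,f+(f/2-1)$ via exactly the same decompositions ($m+(m+k)$ for $3\le k\le m-1$, then $3m=m+m+m$, $3m+1=m+(2m+1)$, $3m+2=(m+3)+(2m-1)$, in your notation $m=f/2-1$), and conclude using the multiplicity $m$. The only nitpick is that in ruling out $2m+2$ as a sum of two generators you should also exclude the pair $(m+1,m+1)$, not just $(m,m+2)$; this is immediate since $m+1\notin A$.
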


\begin{proof}
Since $2(f/2-1)=f-2\not \in A$, $(f/2-1)+(f/2+2)=f+1$ and $\gcd(A)=1$, the set $A$ is a minimal system of generators of $S_f$ and $f \not \in S_f$.

The elements
$f+1=(f/2-1)+(f/2+2)$,
$f+2=(f/2-1)+(f/2+3)$,
$\dots$,
$f+(f/2-4)=(f/2-1)+2(f/2-1)-1$,
$f+(f/2-3)=(f/2-1)+(f/2-1)+(f/2-1)$, $f+(f/2-2)=(f/2-1)+(f-1)$,
$f+(f/2-1)=(f/2+2)+(2(f/2-1)-1)$
are $f/2-1$ consecutive elements in $S_f$. Hence $\F(S_f)=f$.
\end{proof}

The numerical semigroups with Frobenius numbers $2$, $4$, $6$ and $8$ are the following:
\begin{equation}\label{f2}
    \{\langle 3,4,5 \rangle \},
\end{equation} 
\begin{equation}\label{f4}
    \{\langle 3,5,7 \rangle,\langle 5,6,7,8,9\rangle \},    
\end{equation}
\begin{equation}\label{f6}
\{\langle 4,5,7\rangle,\langle 4,7,9,10\rangle,\langle 5,7,8,9,11\rangle,\langle 7,8,9,10,11,12,13\rangle\},
\end{equation}
and
\begin{equation}\label{f8}
\begin{array}{c}
\{\langle  3, 7, 11 \rangle, \langle  3, 10, 11 \rangle, \langle 5, 6, 7, 9\rangle,
\langle 5, 6, 9, 13\rangle, \langle 5, 7, 9, 11, 13\rangle,\\
\langle 5, 9, 11, 12, 13\rangle, \langle 6, 7, 9, 10, 11\rangle,
\langle 6, 9, 10, 11, 13, 14\rangle, \\
\langle 7, 9, 10, 11, 12, 13, 15\rangle, \langle 9,\dots,17\rangle \},
\end{array}
\end{equation}
respectively.

The semigroups of the sets  (\ref{f2}), (\ref{f4}), (\ref{f6}) and (\ref{f8}) and the families of Proposition \ref{S_f} are the seeds to construct the integers that are realizable as Frobenius numbers of GSI-semigroups.
We propose Algorithm \ref{algoritmo2} to check if there exist GSI-semigroups with a given even Frobenius number.

\begin{algorithm}[H]
	\BlankLine
	\KwData{$f$ an even number.}
	\KwResult{If there exists, a GSI-semigroup with Frobenius number $f$.}
	\BlankLine

    \If{$f<38$}
        {\Return $\emptyset$.}

    ${\cal S}=\{S\text{ numerical semigroup}\mid \F(S)\in 2\N\cap [2,\min\{8,\lfloor \frac{f-2}{9}\rfloor\}] \}$\;

    \ForAll {$S\in {\cal S}, d\in \left[3,\left\lfloor\frac{-1+\sqrt{4f\F(S)+4\F(S)+1}}{2\F(S)}\right\rfloor\right]$ odd, and $\gamma=\frac{f-d\F(S)}{d-1}\in \N$}
        {
        \If{$\big((\gamma>\max \{d\F(S),dM(S)\}) \wedge (\gcd(d,\gamma)=1)\big)$}
           {
           \Return $S\oplus_{d,\gamma}\N$\;
           }
        }
	${\cal A}=\left\{t\in [10,\lfloor \frac{f-2}{9}\rfloor]\mid t \text{ even }\right\}$\;
	\While {${\cal A}\neq \emptyset$}
        {
        $t=\text{First}({\cal A})$\;
        ${\cal A}={\cal A}\setminus\{t\}$\;
    	${\cal B}=\{d\in \left[3,\left\lfloor\frac{-1+\sqrt{4ft+4t+1}}{2t}\right\rfloor\right]\mid d \text{ odd }\}$\;
            \While {${\cal B}\neq \emptyset$}
                {
                $d=\text{First}({\cal B})$\;
                ${\cal B}={\cal B}\setminus\{d\}$\;
                $\gamma=\frac{f-dt}{d-1}$\;
                \If{$\big((\gamma \in\N ) \wedge (\gamma>dt) \wedge (\gcd(d,\gamma)=1)\big)$}
                    {
                    \Return $S_t\oplus_{d,\gamma}\N$\;
                    }
                }
        }
	\Return ${\cal A}$ \;
	\caption{Computation of a GSI-semigroup with even Frobenius number $f$ (if possible).}\label{algoritmo2}
\end{algorithm}

Note that several steps of Algorithm \ref{algoritmo2} can be computed in parallel way. We now illustrate it with a couple of examples.

\begin{example}
Let $f=42$, since $\lfloor \frac{42-2}{9}\rfloor= 4$, by Algorithm \ref{algoritmo2}, only the numerical semigroups with Frobenius number $2$ and $4$ must be considered.

If $\F(S)=2$, then $d\in\{3, 5\}$, since the odd numbers of the set  $\left[3,\lfloor \frac{-1+\sqrt{505}}{4}\rfloor\right]_\N$ are $3$ and $5$. For $d=3$, we have that $\gamma = \frac{42-3\cdot 2}{3-1}=18$, but $\gcd(d,\gamma)=1$ so we do not obtain any GSI-semigroup with Frobenius number $42$ from $S$ with $\F(S)=2$ and $d=3$. For $d=5$, $\gamma = \frac {42-5\cdot 2}{5-1}=8\not > 8 = d\F(S)=5\cdot 2=10$, obtaining again no GSI-semigroups.

If $\F(S)=4$, then $d=3$, since $[3,3]=\{3\}$, which is odd.
We obtain that $\gamma=\frac {42-3\cdot 4}{3-1}=15$. By (\ref{f4}), for $\F(S)=4$, we have $M(S)\geq 7$. In this case
$15\not > \max \{d\F(S),d M(S)\}=\max\{3\cdot 4,3\cdot 7\}=21$.

Hence, there are no GSI-semigroups with Frobenius number $42$.
\end{example}

\begin{example}
Consider $f=4620$. Using the code in Table \ref{codigoPar}, we check that there are no GSI-semigroups of the form  $S\oplus_{d,\gamma}\N$, with $\F(S)\in \{2, 4, 6, 8\}$. 
Nevertheless, the number $4620$ is realizable as the Frobenius number of a GSI-semigroup: the Frobenius number of $S_{12}\oplus_{13,372}\N$, $S_{12}\oplus_{17,276}\N$ and $S_{12}\oplus_{19,244}\N$ is $4620$.

With the code below, we also obtain other examples of Frobenius numbers of GSI-semigroups that cannot be constructed from semigroups $S$ with $\F(S)\in\{2,4,6,8\}$.
\begin{verbatim}
gap> t:=30000; # Bound of the Frobenius numbers.
gap> s1:=Difference([2..(t-2)],
            Union(ListOfFrobeniusD(2,t/2,t),
                Union(ListOfFrobeniusD(4,t/2,t),
                    Union(ListOfFrobeniusD(6,t/2,t),
                        ListOfFrobeniusD(8,t/2,t))))
            );
gap> s2:=ListOfFrobeniusD(12,t/2,t);
gap> Print(Intersection(s1,s2));
[ 4620, 7980, 26460 ]
\end{verbatim}
The new Frobenius numbers are $7980$ and $26460$. Some GSI-semigroups with these Frobenius numbers are:  $S_{12}\oplus_{13,652}\N$ and $S_{12}\oplus_{17,486}\N$ for $7980$, and $S_{12}\oplus_{13,2192}\N$ and $S_{12}\oplus_{17,1641}\N$ for $26460$.

\begin{table}[hb]
\begin{verbatim}
ListOfFrobenius:=function(fS,d,bound)
    local f,listF,gamma,lowerBound;
    listF:=[];f:=0;
    lowerBound:=d*fS;
    if(fS=2) then lowerBound:=fS*5; fi;
    if(fS=4) then lowerBound:=fS*7; fi;
    if(fS=6) then lowerBound:=fS*7; fi;
    if(fS=8) then lowerBound:=fS*9; fi;
    for gamma in [(lowerBound+1)..(bound-1)] do
        if(GcdInt(gamma,d)=1) then
            f:=d*fS+(d-1)*gamma;
            if(f<bound) then Append(listF,[f]);
            fi;
        fi;
    od;
    return listF;
end;

ListOfFrobeniusD:=function(fS,boundD,bound)
    local listF,d;
    d:=3; listF:=[];
    for d in List([1..Int((boundD-1)/2)],k->2*k+1) do
        listF:=Union(listF,ListOfFrobenius(fS,d,bound));
    od;
    return listF;
end;
\end{verbatim}
\caption{GAP code of functions to obtain some GSI-semigroups with even Frobenius numbers.}\label{codigoPar}
\end{table}
\end{example}

\bibliographystyle{unsrt}

\medskip
\noindent
{\small Evelia Rosa García Barroso\\
Departamento de Matem\'aticas, Estadística e I.O. \\
Secci\'on de Matem\'aticas, Universidad de La Laguna\\
Apartado de Correos 456\\
38200 La Laguna, Tenerife, Spain\\
e-mail: ergarcia@ull.es}

\medskip

\noindent {\small Juan Ignacio García-García \\
Departamento de Matem\'aticas\\
Universidad de C\'adiz\\
 E-11510 Puerto Real, C\'adiz, Spain\\
e-mail: ignacio.garcia@uca.es}

\medskip

\noindent {\small Alberto Vigneron-Tenorio\\
Departamento de Matem\'aticas\\
Universidad de C\'adiz\\
E-11406 Jerez de la Frontera, C\'adiz, Spain\\
e-mail: alberto.vigneron@uca.es}

\end{document}